\newtheorem{theorem}{\bf Theorem}[section]
\newtheorem{proposition}{\bf Proposition}[section]
\newtheorem{remark}{\bf Remark}[section]
\newcounter{for}[section]
\newcommand{\be}[1]{\addtocounter{for}{1} \begin{equation}\label{#1}}
\newcommand{\ee}{\end{equation}}
\def\E{{\mathbb E}}
\def\Exi{{\E^N_\xi}}
\def\P{{\mathbb P}}
\def\R{{\mathbb R}}
\def\cL{{\mathcal{L}}}
\def\M{{\cal{M}\rm}}
\def\qsd{{\sc{qsd}}}
\def\fv{{\sc{fv }}}
\def\({{\Bigl(}}
\def\){{\Bigr)}}
\def\reff#1{(\ref{#1})}
\def\one{{\mathbf 1}}
\def\ind{{\mathbf 1}}
\def\square{\ifmmode\sqr\else{$\sqr$}\fi}
\def\sqr{\vcenter{
         \hrule height.1mm
         \hbox{\vrule width.1mm height2.2mm\kern2.18mm\vrule width.1mm}
         \hrule height.1mm}}                  
\def\o{\omega}
\def\v{\varphi}
\newcommand {\cro}[1] {\left[ {#1} \right]}
\newcommand {\acc}[1] {\left\{ {#1} \right\}}
\newcommand {\pare}[1] {\left( {#1} \right)}
\theoremstyle{plain}
\newtheorem{lema}{Lemma}[section]
\theoremstyle{definition}
\theoremstyle{remark}
\begin{document}


\title{Quasi-stationary distributions and Fleming-Viot processes in finite
  spaces} \author{Amine Asselah\\
  Universit\'e Paris-Est,
  UMR CNRS 8050\\
\\
Pablo A. Ferrari\\
U. de S\~ao Paulo, U. de Buenos Aires and CONICET\\
\\
Pablo Groisman\\
Universidad de Buenos Aires and CONICET\\
}


\date{}
\maketitle
\begin{abstract} 
  Consider a continuous time Markov chain with rates $Q$ in the state space
  $\Lambda\cup\{0\}$. In the associated
  Fleming-Viot process $N$ particles evolve independently in $\Lambda$ with
  rates $Q$ until one of them attempts to jump to state $0$. At
  this moment the particle comes back to $\Lambda$ instantaneously, by jumping
  to one of the positions of the other particles, chosen uniformly at random.
  When $\Lambda$ is finite, we show that the empirical distribution of the
  particles at a fixed time converges as $N\to\infty$ to the distribution of a
  single particle at the same time conditioned on not touching $\{0\}$. 
Furthermore,
  the empirical profile of the unique invariant measure for the Fleming-Viot
  process with $N$ particles converges as $N\to\infty$ to the unique
  quasi-stationary distribution of the one-particle motion. A key element of the
  approach is to show that the two-particle correlations is of order $1/N$.
\end{abstract}

{\em Keywords and phrases}:
Quasi-stationary distributions. Fleming-Viot process.

{\em AMS 2000 subject classification numbers}: 60K35, 60J25.

{\em Running head}: Fleming-Viot process and \qsd.

\section{Introduction}


Let $\Lambda$ be a finite or countable state space, whose elements are also
called \emph{sites}. Let $Q=(q(x,y);\,{x,y\in\Lambda\cup\{0\}})$ be the
transition rates matrix of an irreducible continuous time Markov jump process on
$\Lambda\cup\{0\}$.  

For each integer $N>1$, the Fleming-Viot ({\sc fv}) process with $N$ particles,
is a continuous time Markov process, $\xi_t\in \Lambda^N$, $t\ge 0$, where
$\xi_t(i)$ denotes the position of the particle $i$ at time $t$. The generator
$\cL^N$ acts on functions $f:\Lambda^{N}\to\R$ as follows
\begin{equation}\label{generator}
  \cL^N f(\xi) = \sum_{i=1}^N \sum_{x\in\Lambda\setminus\{\xi(i)\}}
  \Bigl[q(\xi(i),x) + q(\xi(i),0)\, 
\frac{\sum_{j\not=i}^N \one_{\{\xi(j)=x\}}}{N-1}\Bigr] (f(\xi^{i,x}) - f(\xi)),
\end{equation}
where $\xi^{i,x}(i)=x$, and for $j\not=i$, $\xi^{i,x}(j) = \xi(j)$. We set
$\E^N_{\xi}[f(\xi_t)]=\exp(t\cL^N)f(\xi)$. In words, each particle moves
independently of the others as a continuous time Markov process with rates $Q$,
but when it attemps to jump to state 0, it comes back immediately to
$\Lambda$ by jumping to the position of one of the other particles chosen
uniformly at random.

This type of \fv process was introduced by Burdzy, Holyst and
 March in \cite{burdzy} for
Brownian motions on a bounded domain. In their model $N$ brownian particles
evolve independently until one of them reaches the boundary, which plays the
role of state 0. In \cite{burdzy}, the authors prove that
the empirical profile of the invariant measure
converges in this case to the first eigenfunction of the Laplacian on the domain
with homogeneous Dirichlet boundary conditions.

Denote by $\eta(\xi,x)$ the number of $\xi$ particles at site $x$, and
by $m(\xi)$ the  
empirical measure induced by a configuration $\xi\in \cup_N \Lambda^N$ 
\begin{equation}\label{a31}
\eta(\xi,x) := \sum_{i=1}^N \one\{\xi(i)=x\}\quad\text{and}\quad
\displaystyle m(\xi) := \frac{\sum_{x\in \Lambda} \eta (\xi,x)\delta_x}
{\sum_{x\in \Lambda}\eta(\xi,x)}
.
\end{equation}
We also use $m_x(\xi)$ to denote $m(\xi)(x)$ and
$q(x,x)=-\sum_{y\in\Lambda\cup\{0\}\setminus\{x\}} q(x,y)$. With this notation,
the time-derivative of $\E^N_{\xi}[m_x(\xi_t)]$ is easily seen to be
\begin{equation}\label{generator.eta}
 \frac{d \E^N_{\xi}[m_x(\xi_t)]}{dt}=
\sum_{y\in \Lambda}
q(y,x)\Exi[m_y(\xi_t)]
  +\frac{N}{N-1}\sum_{y\in \Lambda} q(y,0)\Exi[m_y(\xi_t)m_x(\xi_t)]\,.
\end{equation}

Consider the process on $\Lambda\cup\{0\}$ generated by $Q$, with initial
law $\mu$ and denote $T_t\mu$ its law at time $t$ conditioned on not having
touched $\{0\}$ up to time $t$. In other words, for all $x\in \Lambda$
\begin{equation}
T_t\mu(x) = \frac{\sum_{y\in\Lambda} \mu(y)
\exp(tQ)(y,x)}{1-\sum_{y\in\Lambda} \mu(y) \exp(tQ)(y,0)}.
\end{equation}
Let $\M$ be the space of probability measures on $\Lambda$. 
Then $\{T_t,t\ge 0\}$ is
a semi-group on $\M$ and $T_t\mu$ is the unique solution to the Kolmogorov
forward equations: $T_0(\mu)(x)=\mu(x)$, and
\begin{equation}
 \label{kolmogorov.cond.evolution}
 \frac{d}{dt}T_t\mu(x) = \sum_{y\in \Lambda}
 q(y,x)\,T_t\mu(y)+
 \sum_{y\in \Lambda} q(y,0)\,T_t\mu(y)\,T_t\mu(x).
\end{equation}

A \emph{quasi-stationary distribution} (\qsd) for $Q$ is a probability
measure $\nu$ on $\Lambda$ that is invariant under $\{T_t,t\ge 0\}$, that is
$$
T_t\nu= \nu, \quad \text{for all} \quad t\ge 0.
$$

Now, assume that $\mu$ is close to $m(\xi)$, and look at \reff{generator.eta}
and \reff{kolmogorov.cond.evolution}. A natural approach to show that $T_t\mu$
is close to $\E_{\xi}^N [m(\xi_t)]$ is by
establishing that the occupation numbers of two distinct sites, at time $t$,
become independent when $N$ tends to infinity (the so-called {\it propagation of
  chaos}).  For this purpose, Ferrari and Maric \cite{fm}, estimate the
correlation of two $\xi$-particles, when $\Lambda$ is only assumed countable.
\begin{proposition}[Ferrari and Maric \cite{fm}, Proposition
  3.1]\label{chaos-fm}
Let $\mu$ be any probability measure on $\Lambda$, and $\mu^{\otimes N}$ the
product probability on $\Lambda^N$. Then, there is a constant
$\kappa$, such that for any $x,y\in \Lambda$, and $t>0$,
\begin{equation}\label{ferrari-nevena0}
  \Bigl|\int \Exi[m_x(\xi_t)m_y(\xi_t)]d\mu^{\otimes N}(\xi)
  -\int \Exi[m_x(\xi_t)]d\mu^{\otimes N}(\xi)\int\Exi[m_y(\xi_t)]
  d\mu^{\otimes N}(\xi)\Bigr|\le \frac{e^{\kappa t}}{N}.
\end{equation}
\end{proposition}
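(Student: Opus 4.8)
The plan is to control the correlation between two tagged particles by a differential-inequality argument on a suitably chosen functional. Concretely, write $\xi_t$ for the $N$-particle FV process started from $\mu^{\otimes N}$, and introduce the symmetric two-particle object
\begin{equation}
D_t(x,y) := \int \Exi\bigl[m_x(\xi_t)m_y(\xi_t)\bigr]\,d\mu^{\otimes N}(\xi)
 - \Bigl(\int \Exi[m_x(\xi_t)]\,d\mu^{\otimes N}(\xi)\Bigr)\Bigl(\int \Exi[m_y(\xi_t)]\,d\mu^{\otimes N}(\xi)\Bigr).
\end{equation}
Because the initial law is a product measure, $D_0(x,y)$ is already $O(1/N)$: the empirical measure of $N$ i.i.d. samples has covariances of order $1/N$, so $\sup_{x,y}|D_0(x,y)|\le C/N$ for an explicit $C$. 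The goal is then to show $\sup_{x,y}|D_t(x,y)|$ satisfies a linear Gronwall-type bound with a constant depending only on $Q$ (hence the $e^{\kappa t}/N$), uniformly in $N$.

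First I would differentiate $D_t$ in time. The term $\frac{d}{dt}\Exi[m_x(\xi_t)m_y(\xi_t)]$ is computed from the generator \reff{generator}; because $\cL^N$ has both an independent-motion part and a redistribution part, the derivative of the pair expectation produces (i) the "free" transport terms $\sum_z q(z,x)\Exi[m_z m_y] + \sum_z q(z,y)\Exi[m_x m_z]$, (ii) redistribution terms of the schematic form $\frac{N}{N-1}\sum_z q(z,0)\Exi[m_z m_x m_y]$ coming from applying the jump-to-0 mechanism to a particle and landing on the pair, and (iii) genuinely two-body "collision" terms of order $1/N$ that arise when the same particle index is counted in both $m_x$ and $m_y$ (the diagonal $i=j$ contributions and the $1/(N-1)$ prefactor in the redistribution rate). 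Subtracting the product of the two one-particle evolutions \reff{generator.eta}, the transport terms reorganise into $\sum_z q(z,x) D_t(z,y) + \sum_z q(z,y) D_t(x,z)$, i.e. the same linear transport acting on $D$; the cubic redistribution terms must be handled by writing $\Exi[m_z m_x m_y]$ as $\Exi[m_z]\,\Exi[m_x m_y] + (\text{a correlation})$ and absorbing the correlation into $D$-type quantities plus a remainder; and the collision terms are manifestly bounded by $\|Q\|/N$ in sup norm.

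The heart of the estimate is therefore: $\frac{d}{dt}D_t(x,y)$ equals a linear operator applied to $D_t$ (transport in the $x$ variable plus transport in the $y$ variable, plus lower-order redistribution corrections that are again linear in $D$ up to the product structure), plus an inhomogeneous term bounded by $C\|Q\|/N$. Taking the supremum over $x,y$ and using that the transport operator $\sum_z q(z,\cdot)$ has operator norm controlled by $\kappa_0:=2\max_z|q(z,z)|$ (or more precisely that it is the generator of a sub-Markov semigroup, which is a contraction on sup norm after the right sign bookkeeping), one gets
\begin{equation}
\frac{d}{dt}\sup_{x,y}|D_t(x,y)| \le \kappa \sup_{x,y}|D_t(x,y)| + \frac{C\|Q\|}{N},
\end{equation}
and Gronwall's lemma together with $\sup_{x,y}|D_0(x,y)|\le C/N$ yields $\sup_{x,y}|D_t(x,y)|\le e^{\kappa t}/N$ after adjusting constants. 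The main obstacle I anticipate is the bookkeeping of the redistribution terms: the nonlinear term $q(z,0)m_z m_x$ in the one-particle equation, when its square/product is subtracted from the cubic term $\frac{N}{N-1} q(z,0)\Exi[m_z m_x m_y]$, does not cancel cleanly — one has to decompose the triple correlation, identify which pieces are themselves second-order correlations (and hence fold into a closed Gronwall system, possibly requiring one to estimate triple correlations by $D$ plus $O(1/N)$ rather than in isolation), and verify that the leftover terms are genuinely $O(1/N)$ uniformly in $x,y$ and in $N$. A clean way to organize this is to track the full hierarchy only to the extent of showing that $k$-particle correlations are $O(1/N)$ by induction, but for the stated proposition it suffices to close the estimate at the level of pairs by bounding the triple-correlation contribution crudely using $0\le m_z\le 1$ and the already-established smallness of $D$; keeping all constants independent of $N$ throughout is the delicate point.
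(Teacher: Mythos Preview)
The paper does not itself prove this proposition: it is quoted from Ferrari--Mari\'c \cite{fm}. What the paper \emph{does} prove is the sharper Proposition~\ref{chaos-pablo}, valid uniformly over deterministic initial configurations, and the method there is entirely different from yours. It is a probabilistic coupling argument built on the Harris-type graphical construction of Section~\ref{graphical.construcion}: to each particle $i$ one attaches an ``ancestor set'' $\psi^i(t)\subset\{1,\dots,N\}$ of labels whose randomness can influence $\xi_t(i)$, and Lemma~\ref{lem-rate} shows that for $i\ne j$ the event $\{\psi^i(t)\cap\psi^j(t)\ne\emptyset\}$ has probability at most $(e^{2Ct}-1)/(N-1)$. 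On the complement, $\xi_t(i)$ and $\xi_t(j)$ are measurable with respect to independent pieces of the Poisson randomness and are therefore independent; summing over $i,j$ gives the correlation bound with no hierarchy to close and no Gronwall step.

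Your analytic route via a differential inequality for $D_t(x,y)$ is natural, but the closure you propose at the end does not work. After subtracting the single-particle equations from the pair equation, the redistribution part leaves a term of the form
\[
\frac{2N}{N-1}\sum_{z}q(z,0)\bigl[\langle m_zm_xm_y\rangle-\langle m_z\rangle\langle m_xm_y\rangle\bigr],
\]
i.e.\ a genuine covariance of $m_z$ with the product $m_xm_y$, which is \emph{not} a linear combination of the pair quantities $D_t(\cdot,\cdot)$. Your suggested ``crude bound using $0\le m_z\le 1$'' yields only $|\text{this term}|\le 2C\langle m_xm_y\rangle=2C\,D_t(x,y)+2C\langle m_x\rangle\langle m_y\rangle$, and the piece $2C\langle m_x\rangle\langle m_y\rangle$ is $O(1)$, not $O(1/N)$; so the inhomogeneous term in the Gronwall inequality is of the wrong order and the argument does not close at the pair level. (Cauchy--Schwarz on the covariance fares no better: it produces $\sqrt{D_t(z,z)}$, which is $O(N^{-1/2})$ rather than $O(N^{-1})$, and the resulting inequality $F'\lesssim \sqrt{F}$ does not preserve $F=O(1/N)$.) To make the analytic approach go through one must genuinely control the third-order correlations --- either by running the hierarchy you allude to, or by an independent argument. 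The virtue of the paper's coupling method is exactly that it bypasses this closure problem: on the good event the ancestor sets are disjoint and correlations of \emph{all} orders vanish simultaneously.
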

For countable $\Lambda$, the ergodicity of the \fv process is not guaranteed,
neither is the convergence of the invariant measures as $N\to \infty$.  Under
further assumptions (see \reff{def-C}), there exists a unique invariant measure for the \fv process
and its profile converges as $N\to\infty$ to the unique \qsd.

\begin{theorem}[Ferrari and Maric \cite{fm}, Theorem 1.2 and 1.4]\label{fm-theo}
Let $\mu$ be any probability measure on $\Lambda$, and $\mu^{\otimes N}$ the
product probability on $\Lambda^N$. Then, for each $x\in \Lambda$
\begin{equation}\label{ferrari-nevena1}
\lim_{N\to \infty} \int_{\Lambda^N} 
\Exi\cro{\pare{m_x(\xi_t)-T_t\mu(x)}^2}\,d\mu^{\otimes N}(\xi)=0.
\end{equation}
Moreover, if 
\begin{equation} 
\label{def-C} 
\sum_{z\in\Lambda} \inf_{x\in\Lambda\setminus
    \{z\}}q(x,z)\;> \;\max_{x\in \Lambda}q(x,0),
\end{equation}
then there exists a measure $\nu$ on $\Lambda$ and for each $N$ there exists a
unique stationary measure for the Fleming-Viot process, say $\lambda^N$ such
that for each $x\in \Lambda$
\begin{equation}\label{ferrari-nevena2}
\lim_{N\to \infty} \int_{\Lambda^N} \pare{
m_x(\xi) - \nu(x)}^2\, d\lambda^{N}(\xi)=0.
\end{equation}
Furthermore, $\nu$ is the unique quasi-stationary distribution of $Q$.
\end{theorem}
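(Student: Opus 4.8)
The plan is to deduce all three assertions from the $O(1/N)$ correlation bound of Proposition \ref{chaos-fm} together with the mean-field ODE \reff{kolmogorov.cond.evolution} and its $N$-particle analogue \reff{generator.eta}. Write $\phi^N_x(t):=\int\Exi[m_x(\xi_t)]\,d\mu^{\otimes N}(\xi)$. For \reff{ferrari-nevena1} I would first expand the square as
\[
\int\Exi\cro{\pare{m_x(\xi_t)-T_t\mu(x)}^2}\,d\mu^{\otimes N}
=\pare{\phi^N_x(t)-T_t\mu(x)}^2+\Bigl(\int\Exi[m_x(\xi_t)^2]\,d\mu^{\otimes N}-\phi^N_x(t)^2\Bigr),
\]
so that, the last parenthesis being a nonnegative variance which Proposition \ref{chaos-fm} (taken with $y=x$) bounds by $e^{\kappa t}/N$, it remains only to show $\phi^N_x(t)\to T_t\mu(x)$ for each $x$ and $t$. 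Integrating \reff{generator.eta} against $\mu^{\otimes N}$ and applying Proposition \ref{chaos-fm} once more to replace $\int\Exi[m_y(\xi_t)m_x(\xi_t)]\,d\mu^{\otimes N}$ by $\phi^N_y(t)\phi^N_x(t)$ up to an error $O(e^{\kappa t}/N)$, one sees that $t\mapsto(\phi^N_x(t))_x$ solves the system \reff{kolmogorov.cond.evolution} with an additive error that is $O(e^{\kappa t}/N)$ uniformly in $x$. Since $\phi^N(t)$ and $T_t\mu$ both stay in the simplex $\M$, where the quadratic right-hand side of \reff{kolmogorov.cond.evolution} is Lipschitz, a Gronwall estimate then yields $\sup_x\va{\phi^N_x(t)-T_t\mu(x)}\le C(t)/N\to0$, which proves \reff{ferrari-nevena1}. (For countable $\Lambda$ one must check the Gronwall step goes through with $x$-uniform constants; for finite $\Lambda$ this is automatic.)

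For the second part I would, under hypothesis \reff{def-C}, first show that for each $N$ the Fleming--Viot chain is positive recurrent with a unique stationary measure $\lambda^N$, by verifying a Foster--Lyapunov drift inequality in which \reff{def-C} makes the drift negative uniformly in $N$ (the left side of \reff{def-C} measuring how fast new sites get populated and the right side how fast the redistribution moves occur); the same input gives tightness of the laws $\{\lambda^N\circ m^{-1}\}$ on $\M$ — automatic when $\Lambda$ is finite since $\M$ is then compact. Set $\nu^N_x:=\int m_x(\xi)\,d\lambda^N(\xi)$ and $v^N_x:=\int\pare{m_x(\xi)-\nu^N_x}^2\,d\lambda^N(\xi)$, and pass to a subsequence along which $\nu^N\to\nu$. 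The crucial estimate is the stationary decorrelation bound $v^N_x\to0$ for every $x$, which forces $\lambda^N\circ m^{-1}$ to concentrate at $\nu$. Granting it, I would put $\lambda^N$-stationarity into \reff{generator.eta} — so the left side vanishes and $\int\Exi[m_y(\xi_t)m_x(\xi_t)]\,d\lambda^N=\int m_y m_x\,d\lambda^N$ for all $t$ — and let $N\to\infty$ along the subsequence, using $\va{\int m_y m_x\,d\lambda^N-\nu^N_y\nu^N_x}\le(v^N_y v^N_x)^{1/2}\to0$ and $N/(N-1)\to1$, to get $\sum_y q(y,x)\nu_y+\sum_y q(y,0)\nu_y\nu_x=0$ for all $x$; since $t\mapsto\nu$ then solves \reff{kolmogorov.cond.evolution}, uniqueness of solutions gives $T_t\nu=\nu$, i.e.\ $\nu$ is a \qsd. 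Finally, when $\Lambda$ is finite, \reff{def-C} also guarantees that $Q$ possesses a unique \qsd\ — a Perron--Frobenius-type statement for the sub-Markovian generator obtained from $Q$ by deleting the row and column of $0$ — so every subsequential limit equals this $\nu$, giving $\nu^N\to\nu$ and the convergence \reff{ferrari-nevena2}, with $\nu$ the unique \qsd.

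The hard part will be the stationary decorrelation estimate $v^N_x\to0$. Proposition \ref{chaos-fm} controls two-point correlations only when the system starts from a \emph{product} law, and with a constant $e^{\kappa t}$ that blows up as $t\to\infty$, whereas $\lambda^N$ is a genuinely correlated measure reached precisely in the regime $t\to\infty$; one therefore cannot simply let $t\to\infty$ in \reff{ferrari-nevena0}. The role of \reff{def-C} is to supply a decorrelation mechanism whose rate is uniform in $N$ and strictly stronger than the $O(1/N)$ rate at which redistribution moves inject new correlations: concretely I would derive the evolution equation for $\int\Exi[m_x(\xi_t)m_y(\xi_t)]\,d\lambda^N$ under the Fleming--Viot dynamics, extract from \reff{def-C} a Gronwall bound with an $N$-independent negative exponent for the correlation $\int\Exi[m_x(\xi_t)m_y(\xi_t)]\,d\lambda^N-\nu^N_x\nu^N_y$, and then use stationarity to turn this into $v^N_x=O(1/N)$. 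Making this last step quantitative — reconciling the short-time control of Proposition \ref{chaos-fm} with the long-time behaviour forced by stationarity, which is exactly where \reff{def-C} enters — is the real work.
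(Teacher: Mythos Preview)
This theorem is not proved in the present paper: it is quoted from Ferrari--Maric \cite{fm} as background (note the attribution in the theorem head), and the paper's own contributions are Proposition~\ref{chaos-pablo} and Theorem~\ref{main.theorem}. There is therefore no in-paper proof to compare your proposal against.

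On the merits of your sketch: the argument for \reff{ferrari-nevena1} is sound and in fact parallels what the paper does for its own Lemma~\ref{teo.conv.means} --- expand the square, control the variance via the correlation bound, and close the mean by a Gronwall comparison with the ODE \reff{kolmogorov.cond.evolution}. You correctly flag that for countable $\Lambda$ the Gronwall step needs $x$-uniform Lipschitz constants, which is not automatic and would have to be addressed.

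For \reff{ferrari-nevena2} you have identified the genuine obstruction --- the $e^{\kappa t}/N$ bound of Proposition~\ref{chaos-fm} is useless as $t\to\infty$, while $\lambda^N$ lives precisely in that regime --- and you gesture at the right kind of remedy (extract from \reff{def-C} a uniform contraction rate that beats the $O(1/N)$ correlation-creation rate). But you stop there, calling it ``the real work,'' so as written this is a program rather than a proof: you have not shown how \reff{def-C} actually yields the stationary decorrelation $v^N_x\to 0$, nor how uniqueness of the \qsd\ follows for \emph{countable} $\Lambda$ (your final paragraph retreats to finite $\Lambda$ and Perron--Frobenius, whereas the theorem is stated in the countable setting). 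These are the substantive gaps.
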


Here, we have similar results. We prove the asymptotic independence 
of two particles
but, in contrast to \cite{fm}, we consider deterministic initial
configurations and obtain bounds on the correlations that hold uniformly on the
initial distribution of the particles. This result also holds for
countable~$\Lambda$.

\begin{proposition}\label{chaos-pablo}
For each $t>0$, and any $x,y\in\Lambda$ 
\begin{equation}\label{correlations}
\sup_{\xi\in\Lambda^N}\big|\Exi[m_x(\xi_t)m_y(\xi_t)]-
\Exi[m_x(\xi_t)]\,\Exi[m_y(\xi_t)]\big|\le \frac{\ind_{\{x=y\}}}{N}+
\frac{2}{N}\pare{e^{2Ct}-1}.
\end{equation}
\end{proposition}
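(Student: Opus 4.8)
The plan is to set up a differential inequality for the supremum over initial configurations of the two-point correlation. Write $\varphi_t(\xi,x,y) := \Exi[m_x(\xi_t)m_y(\xi_t)] - \Exi[m_x(\xi_t)]\Exi[m_y(\xi_t)]$ and $D_t := \sup_{x,y\in\Lambda}\sup_{\xi\in\Lambda^N}|\varphi_t(\xi,x,y)|$. At $t=0$ the configuration is deterministic, so $m_x(\xi_0)m_y(\xi_0)=m_x(\xi_0)m_y(\xi_0)$ trivially and the correlation vanishes; hence $D_0=0$. The idea is to differentiate $\Exi[m_x(\xi_t)m_y(\xi_t)]$ in $t$ using the generator $\cL^N$ acting on the function $\xi\mapsto m_x(\xi)m_y(\xi)$, and likewise differentiate the product of the two one-point functions using \reff{generator.eta}. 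Subtracting, the ``linear'' transport terms (those coming from the independent $Q$-motion of a single particle) should reproduce a linear semigroup action on $\varphi$, while the remaining terms — the quadratic reinjection terms and the purely combinatorial error terms produced because $m_x m_y$ is not exactly a sum over independent particles — will be controlled by $D_t$ plus an explicit $O(1/N)$ term.

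Concretely, the first step is the algebra of $\cL^N(m_x m_y)$. Since $m_x(\xi)=\eta(\xi,x)/N$, we have $m_x(\xi^{i,z})-m_x(\xi) = (\one_{\{z=x\}}-\one_{\{\xi(i)=x\}})/N$, and so $\cL^N(m_x m_y)(\xi)$ expands into (a) terms linear in the jump rates times $m_\cdot(\xi)$ or $m_\cdot(\xi)m_\cdot(\xi)$, matching what one gets by applying the one-particle conditional evolution to each factor, plus (b) a ``diagonal'' remainder of size $O(1/N)$ coming from the $i=$ same-particle quadratic cross term $(m_x(\xi^{i,z})-m_x(\xi))(m_y(\xi^{i,z})-m_y(\xi))$, which carries the factor $1/N^2$ times $N$ rates, i.e. $O(1/N)$ once the rates are bounded. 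The bounded-rates input is where the constant $C$ enters: one should take $2C$ to be (an upper bound for) the relevant total jump rate, consistent with the $e^{2Ct}$ appearing on the right-hand side. Carrying this out gives an identity of the form $\frac{d}{dt}\varphi_t = (\text{linear action on }\varphi_t) + (\text{reinjection terms bounded by } c\, D_t) + R_t$ with $\sup|R_t|\le c'/N$, uniformly in $\xi,x,y$.

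The second step is to pass from this identity to the scalar inequality $D_t' \le 2C\, D_t + \frac{2C}{N}$ (the exact constants to be read off from step one, arranged so that after Gronwall one lands on $\frac{2}{N}(e^{2Ct}-1)$), and then integrate. The indicator term $\one_{\{x=y\}}/N$ is handled separately: when $x=y$, $m_x(\xi_t)^2$ differs from the ``independent'' object by a genuine single-particle self-correlation, namely $\Exi[m_x(\xi_t)^2]-\Exi[m_x(\xi_t)]^2 \ge 0$ always contains the diagonal contribution $\tfrac1N\Exi[m_x(\xi_t)] - $ (lower order), so one simply adds the crude bound $1/N$ for that diagonal piece and applies the off-diagonal analysis to the rest. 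Assembling the two contributions yields \reff{correlations}.

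The main obstacle is the bookkeeping in step one: one must verify that \emph{all} the terms in $\cL^N(m_xm_y)$ that are linear in $\varphi$-type quantities genuinely close up — i.e. that the transport part of $\frac{d}{dt}\Exi[m_xm_y]$ minus the corresponding part of $\frac{d}{dt}(\Exi[m_x]\Exi[m_y])$ really is a bounded linear operator applied to $\varphi_t$ (so that Gronwall applies), and that everything left over is either $O(1/N)$ deterministically or bounded by a constant times $D_t$. The quadratic reinjection term $\frac{N}{N-1}\sum_y q(y,0)\Exi[m_y m_x]$ is the delicate one, since it is itself a two-point function; here one uses $\frac{N}{N-1}=1+\frac1{N-1}$ to split off another $O(1/N)$ error and writes $\Exi[m_y m_x m_\cdot]$-type triples as (product of expectations) $+$ (bounded multiples of pairwise correlations), the latter absorbed into $D_t$ because $m_\cdot \le 1$. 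Once the boundedness of rates is invoked to make the $N$-fold sums of $1/N^2$ terms into $1/N$, the rest is the routine Gronwall estimate.
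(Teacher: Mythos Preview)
Your approach is genuinely different from the paper's and, as written, does not deliver the stated bound.

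The paper does \emph{not} use a generator/Gronwall argument for this proposition. Instead it builds the process from a marked Poisson system (Section~\ref{graphical.construcion}): each particle~$i$ carries an \emph{internal} process (rate~$\bar q$, governing jumps inside~$\Lambda$) and a \emph{voter} process (rate~$C=\max_x q(x,0)$, governing reincarnations). From the voter marks alone one reads off an ``ancestry set'' $\psi^i(t)\subset\{1,\dots,N\}$ of labels that can possibly influence $\xi_t(i)$. Lemma~\ref{lem-rate} shows $\P(\psi^i(t)\cap\psi^j(t)\neq\emptyset)\le (e^{2Ct}-1)/(N-1)$; on the complementary event, $\{\xi_t(i)=x\}$ and $\{\xi_t(j)=y\}$ are measurable with respect to disjoint, hence independent, pieces of the Poisson system. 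Summing over $i,j$ and isolating the $N$ diagonal terms gives \eqref{correlations}. The key structural point is that internal jumps create \emph{no} dependence between particles; only the voter process does, and it runs at rate~$C$. This is why the exponent is exactly $2C$, and why the result holds for countable~$\Lambda$.

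Your differential-inequality route runs into two concrete problems. First, the equation for $\varphi_t(x,y)$ does not close at second order: the reinjection part of $\cL^N(m_xm_y)$ produces $\Exi[m_z m_x m_y]$, and after subtracting the product you are left with three-point quantities such as $\mathrm{Cov}(m_z m_x,m_y)$. These are \emph{not} ``bounded multiples of pairwise correlations'' in any obvious way; you would need, e.g., a Cauchy--Schwarz step bounding the centred triple product by $D_t$ (using $|m_\cdot-\Exi m_\cdot|\le 1$), which you do not supply. Second, and more seriously, the linear transport part $\sum_z q(z,x)\varphi_t(z,y)+\sum_z q(z,y)\varphi_t(x,z)$ is the action of $Q^T$ on each index. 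This is contractive in $\ell^1$ but \emph{not} in the sup-norm you are using for $D_t$; a crude bound feeds $\sum_z|q(z,x)|$ into the Gronwall constant, and the carr\'e-du-champ remainder likewise carries all the internal rates. Your sentence ``one should take $2C$ to be an upper bound for the relevant total jump rate'' misreads the definition: $C$ is only the maximal \emph{absorption} rate $q(\cdot,0)$, which can be much smaller than $\bar q$. So your method yields at best $D_t\le K_1(e^{K_2 t}-1)/N$ with $K_2$ depending on all of $Q$, not the clean $\frac{2}{N}(e^{2Ct}-1)$. The graphical construction is precisely what separates the dependence-creating mechanism (rate~$C$) from the harmless internal motion.
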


Our proof yields also that any finite number of particles evolve independently
in the limit $N\to\infty$. A similar result is obtained by Grigorescu and Kang
\cite{gk2}, following the approach \cite{gk1} by the same authors.

\paragraph{Finite state spaces.} In the rest of the paper we consider a finite
$\Lambda$. In this case, for each $N\ge 2$ the {\fv} process is an irreducible
pure-jump Markov process on the finite state space $\Lambda^N$. Hence it is
ergodic, that is, there exists a unique stationary measure for the process and
starting from any measure, the process converges to the stationary measure. We
still denote this measure with $\lambda^N$.

When $\Lambda$ is finite, Darroch and Seneta \cite{darroch} prove that $T_t\mu$
converges exponentially fast to a probability measure $\nu$, uniformly in the
initial measure. The measure $\nu$ is the unique quasi-stationary distribution
of $Q$.
\begin{theorem}[Darroch and Seneta, 1967]\label{ds}
  Assume $\Lambda$ is finite, and that the process on $\Lambda$ with rates
  $\{q(x,y), x,y\in \Lambda\}$ is irreducible. Then there exists $\theta>0$,
  such that
\begin{align}\label{da-se}
\sup_{\mu \in \M} \|T_t\mu-\nu\| \le e^{-\theta t}.
\end{align}
\end{theorem}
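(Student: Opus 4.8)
The plan is to prove Theorem~\ref{ds} (Darroch--Seneta) via the classical Perron--Frobenius route applied to the sub-stochastic semigroup obtained by killing at $0$. Let $\tilde Q$ be the $\Lambda\times\Lambda$ matrix $(q(x,y))_{x,y\in\Lambda}$, i.e.\ the restriction of $Q$ to the transient class, and let $P_t = \exp(t\tilde Q)$, the honest-to-substochastic semigroup satisfying $P_t\one \le \one$ componentwise. Since the process with rates $\{q(x,y):x,y\in\Lambda\}$ is irreducible and $\Lambda$ is finite, for every $t>0$ the matrix $P_t$ is irreducible with strictly positive entries (every state reaches every other in finite time; one uses that $\exp(t\tilde Q)=e^{ct}\exp(t(\tilde Q+cI))$ with $c>0$ large makes $\tilde Q+cI$ a nonnegative irreducible matrix, whose exponential is strictly positive). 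Perron--Frobenius then gives a simple top eigenvalue $e^{-\theta t}$ for $P_t$, equivalently a simple right eigenvalue $-\theta$ of $\tilde Q$ with $\theta>0$ (positivity of $\theta$ because $P_t$ is strictly substochastic: some $q(x,0)>0$, and irreducibility propagates the mass loss), together with strictly positive left and right eigenvectors; normalize the left eigenvector to a probability measure $\nu$ on $\Lambda$. One checks directly that this $\nu$ satisfies $\nu P_t = e^{-\theta t}\nu$, hence $T_t\nu = \nu P_t/\|\nu P_t\| = \nu$, so $\nu$ is a {\qsd}; simplicity of the Perron eigenvalue forces uniqueness.

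For the quantitative convergence \reff{da-se}, the key step is a spectral gap estimate. Write $\rho<e^{-\theta}$ (using $t=1$) for the modulus of the second-largest eigenvalue of $P_1$; since the Perron eigenvalue is simple and $P_1$ is primitive, all other eigenvalues have strictly smaller modulus, so $\rho<e^{-\theta}$. Decompose $P_t = e^{-\theta t}\,\Pi + R_t$ where $\Pi = v\,\nu/(\nu v)$ is the rank-one spectral projector onto the Perron eigenspace ($v$ the right Perron eigenvector) and $R_t = P_t - e^{-\theta t}\Pi$ satisfies $\|R_t\|\le C e^{-\theta' t}$ for some $\theta'>\theta$, by finite-dimensional spectral theory (all norms on a finite-dimensional space are equivalent, and $R_t$ lives on the complementary invariant subspace where the spectral radius is $<e^{-\theta}$). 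Then for any probability $\mu$,
\begin{equation}
T_t\mu = \frac{\mu P_t}{\mu P_t\one}
= \frac{e^{-\theta t}\mu\Pi + \mu R_t}{e^{-\theta t}\mu\Pi\one + \mu R_t\one}
= \frac{\nu + e^{\theta t}\mu R_t/(\mu v)\cdot(\nu v)^{-1}\cdots}{1 + e^{\theta t}\mu R_t\one/(\cdots)},
\end{equation}
and since $\mu\Pi\one = (\mu v)/(\nu v)\,(\nu\one) = (\mu v)/(\nu v)$ is bounded below by a positive constant uniformly in $\mu$ (because $v$ has strictly positive entries on the finite set $\Lambda$), the ratio simplifies to $T_t\mu - \nu = O\!\big(e^{-(\theta'-\theta)t}\big)$ uniformly in $\mu$. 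Renaming the gap constant $\theta'-\theta$ as $\theta$ (and absorbing the constant, then enlarging $t_0$ so the bound holds for all $t\ge0$ after adjusting, or simply restating with a constant) yields \reff{da-se}.

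**The main obstacle** is bookkeeping rather than conceptual: one must verify that $\mu P_t\one$ stays bounded away from $0$ uniformly over all initial $\mu\in\M$, which is exactly where finiteness and irreducibility of $\Lambda$ are essential, and then control the denominator in the ratio $T_t\mu = \mu P_t/(\mu P_t\one)$ carefully enough that the normalization does not destroy the exponential rate coming from the numerator. A clean way to package this is: write $f_t(\mu) = \mu P_t/(\mu P_t\one)$ and show $f_t$ is a contraction in a suitable metric (e.g.\ Hilbert's projective metric, under which $\mu\mapsto \mu P_t$ is a strict contraction with coefficient the Birkhoff contraction ratio $\tanh(\Delta/4)<1$ where $\Delta$ is the finite projective diameter of the image of $P_t$), which immediately gives exponential convergence to the unique fixed point $\nu$ uniformly in $\mu$, and then translate back to the total-variation norm $\|\cdot\|$ using equivalence of metrics on the (compact) simplex $\M$ away from its boundary — the iterates $T_t\mu$ for $t\ge t_0$ all lie in a fixed compact subset of the interior of $\M$ since $P_{t_0}$ has strictly positive entries. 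Either route (explicit spectral decomposition or Birkhoff/Hilbert metric) completes the proof; I would present the spectral one as it makes the identification of $e^{-\theta t}$ with the Perron eigenvalue most transparent and ties directly to the {\qsd} interpretation of $\nu$.
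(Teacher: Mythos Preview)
The paper does not prove this theorem; it is stated as a result of Darroch and Seneta \cite{darroch} and used as a black box (specifically in the inequality chain \reff{triangle0}). So there is no ``paper's own proof'' to compare against.

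Your sketch is essentially the classical Darroch--Seneta argument: Perron--Frobenius for the strictly positive substochastic matrix $P_t=\exp(t\tilde Q)$ gives a simple dominant eigenvalue $e^{-\theta t}$ with positive left eigenvector $\nu$ (the \qsd), and the spectral gap then yields exponential convergence of $T_t\mu=\mu P_t/(\mu P_t\one)$ to $\nu$. The crucial uniformity in $\mu$ comes, as you note, from the strict positivity of the right Perron eigenvector $v$, which bounds $\mu v$ away from zero over the simplex $\M$. The alternative you mention via Birkhoff's contraction in the Hilbert projective metric is equally standard and arguably cleaner for getting uniformity.

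One small point to tidy: your argument naturally produces a bound of the form $\|T_t\mu-\nu\|\le C e^{-(\theta'-\theta)t}$ with a multiplicative constant $C$, and your remark about ``absorbing the constant'' is a bit loose. With the paper's normalization $\|\mu-\nu\|=\sum_x|\mu(x)-\nu(x)|$ (which can equal $2$), the literal inequality $\|T_t\mu-\nu\|\le e^{-\theta t}$ cannot hold at $t=0$; the intended statement is either with a constant in front, or with $\theta$ chosen small enough that the bound holds for $t\ge t_0$ and the trivial bound $\le 2$ covers $[0,t_0]$. You flag this, but it is worth stating cleanly rather than leaving it parenthetical.
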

We have used the total variation norm
$\|\mu-\nu\|=\sum_{x\in \Lambda}|\mu(x)-\nu(x)|$.

The asymptotic independence of Proposition \ref{chaos-pablo} implies naturally
the convergence of the empirical means in the \fv process to the conditioned
distribution $T_tm(\xi)$, uniformly in $\xi$. 
Moreover, $\Lambda$ finite yields that $T_t(m(\xi))$ is close to the unique \qsd,
uniformly in $\xi$ as implied by \reff{da-se}. 
These two facts imply the following result.

\begin{theorem} \label{main.theorem} Assume $\Lambda$ finite. For any positive
  time $t$
\begin{equation}\label{key.1}
\lim_{N\to\infty} \sup_{\xi \in \Lambda^N} 
\Exi[\|m(\xi_t)-T_tm(\xi)\|]=0.
\end{equation}
Moreover,
\begin{equation}\label{key.2}
\lim_{N\to\infty} 
\int_{\Lambda^N}\|m(\xi)- \nu\|\, d\lambda^N(\xi) = 0.
\end{equation}
\end{theorem}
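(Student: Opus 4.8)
The plan is to derive both displays from Proposition~\ref{chaos-pablo} (asymptotic independence) together with a Gronwall-type comparison between \reff{generator.eta} and \reff{kolmogorov.cond.evolution}, and then, for \reff{key.2}, to combine \reff{key.1} with the Darroch--Seneta estimate \reff{da-se}.

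For \reff{key.1}: fix $\xi\in\Lambda^N$ and write $u_x(t):=\Exi[m_x(\xi_t)]$ and $v_x(t):=T_tm(\xi)(x)$. Both satisfy a forward equation; subtracting \reff{kolmogorov.cond.evolution} from \reff{generator.eta} gives
\begin{equation}\label{eq:diff}
\frac{d}{dt}(u_x-v_x)=\sum_{y\in\Lambda}q(y,x)(u_y-v_y)+\sum_{y\in\Lambda}q(y,0)\,\big(\Exi[m_y(\xi_t)m_x(\xi_t)]-v_y v_x\big)+\frac{1}{N-1}\sum_{y\in\Lambda}q(y,0)\Exi[m_y(\xi_t)m_x(\xi_t)].
\end{equation}
The last term is $O(1/N)$ uniformly (finitely many sites, bounded rates, $m\le 1$). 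In the middle term I insert and subtract $u_y u_x$: the piece $\Exi[m_y(\xi_t)m_x(\xi_t)]-u_y u_x$ is bounded by $\ind_{\{x=y\}}/N+\tfrac2N(e^{2Ct}-1)$ by Proposition~\ref{chaos-pablo}, hence $O(1/N)$ uniformly on compact time intervals and uniformly in $\xi$; the remaining piece $u_y u_x-v_y v_x=u_y(u_x-v_x)+v_x(u_y-v_y)$ is linear in the error $u-v$ with coefficients bounded by $1$. Summing $|u_x-v_x|$ over the finitely many $x\in\Lambda$ and using $\sum_x|q(y,x)|\le 2|q(y,y)|$ and $\sum_x\le|\Lambda|$, one gets a differential inequality $\frac{d}{dt}\|u(t)-v(t)\|\le C_1\|u(t)-v(t)\|+ \varepsilon_N(t)$ with $\varepsilon_N(t)\to0$ uniformly on $[0,t]$ and uniformly in $\xi$, and $\|u(0)-v(0)\|=0$. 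Gronwall's lemma then yields $\sup_\xi\|u(t)-v(t)\|\to0$. Finally, to pass from the difference of means to $\Exi[\|m(\xi_t)-T_tm(\xi)\|]$ one controls the fluctuation $\Exi[\|m(\xi_t)-u(t)\|]$; by Jensen this is at most $\sum_x(\Exi[(m_x(\xi_t)-u_x(t))^2])^{1/2}=\sum_x(\mathrm{Var}\, m_x(\xi_t))^{1/2}$, and $\mathrm{Var}\,m_x(\xi_t)=\Exi[m_x^2]-u_x^2$, which is again $O(1/N)$ by the $x=y$ case of Proposition~\ref{chaos-pablo}. Combining the two bounds via the triangle inequality $\|m(\xi_t)-T_tm(\xi)\|\le\|m(\xi_t)-u(t)\|+\|u(t)-v(t)\|$ gives \reff{key.1}.

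For \reff{key.2}: since $\lambda^N$ is stationary for the {\fv} process, integrating against $\lambda^N$ and using stationarity, $\int\|m(\xi)-\nu\|\,d\lambda^N=\int\Exi[\|m(\xi_t)-\nu\|]\,d\lambda^N(\xi)$ for every $t$. By the triangle inequality this is at most $\int\Exi[\|m(\xi_t)-T_tm(\xi)\|]\,d\lambda^N(\xi)+\int\|T_tm(\xi)-\nu\|\,d\lambda^N(\xi)$. The second integrand is bounded by $e^{-\theta t}$ uniformly in $\xi$ by Theorem~\ref{ds}, and the first is bounded by $\sup_\xi\Exi[\|m(\xi_t)-T_tm(\xi)\|]$. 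So for every fixed $t$, $\limsup_N\int\|m(\xi)-\nu\|\,d\lambda^N\le e^{-\theta t}$ by \reff{key.1}; letting $t\to\infty$ gives the claim.

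I expect the main obstacle to be the Gronwall step for \reff{key.1}, specifically keeping all error terms uniform in the initial configuration $\xi$ and uniform over $t$ in a compact interval: one must check that the constant $C_1$ in the differential inequality depends only on $Q$ and $|\Lambda|$ (not on $\xi$ or $N$), that $\varepsilon_N(t)$ is bounded by something like $C_2 e^{C_3 t}/N$ coming jointly from the $1/(N-1)$ term and Proposition~\ref{chaos-pablo}, and that the resulting bound $\sup_\xi\|u(t)-v(t)\|\le \varepsilon_N'(t)e^{C_1 t}$ still tends to $0$ for each fixed $t$. The variance bound needed at the end is a direct consequence of the $x=y$ instance of Proposition~\ref{chaos-pablo}, so it is routine; and \reff{key.2} is then essentially formal once \reff{key.1} and Theorem~\ref{ds} are in hand.
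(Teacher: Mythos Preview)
Your proposal is correct and follows essentially the same route as the paper: the same triangle decomposition $\|m(\xi_t)-T_tm(\xi)\|\le\|m(\xi_t)-u(t)\|+\|u(t)-v(t)\|$, the variance bound from the $x=y$ case of Proposition~\ref{chaos-pablo} for the first piece, a Gronwall comparison of \reff{generator.eta} and \reff{kolmogorov.cond.evolution} for the second, and then the stationarity\,+\,Theorem~\ref{ds} argument (exactly \reff{triangle0}) for \reff{key.2}. The only cosmetic difference is that the paper carries out the Gronwall step in $l^2$ (working with $\tfrac{d}{dt}\|u-v\|_2^2$) and converts to total variation at the end, whereas you run it directly in total variation.
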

\begin{remark}\label{rem-corr}
Note that the limit \eqref{key.2} follows 
from \eqref{key.1} using \eqref{da-se}:
\be{triangle0}
\begin{split}
\int_{\Lambda^N} & \|m(\xi)- \nu\|\,d\lambda^N(\xi)  
=\int_{\Lambda^N} \Exi[\|m(\xi_t) - \nu\|]\,d\lambda^N(\xi)\\
&\le \int_{\Lambda^N} \Exi[\|m(\xi_t)-T_tm(\xi)\|]\,d\lambda^N(\xi) 
+\int_{\Lambda^N} \|T_tm(\xi) -\nu\|\,d\lambda^N(\xi)\\
&\le \sup_{\xi\in \Lambda^N} \Exi[\|m(\xi_t)-T_tm(\xi)\|]+
\sup_{\mu\in \M}  \|T_t\mu -\nu\|.
\end{split} 
\ee 
On the other hand, \eqref{key.2} implies readily
(using that $0\le m_x\le 1$ for all $x\in \Lambda$), 
that for any subset $U\subset \Lambda$
\be{eq-patu}
\lim_{N\to\infty} \int \prod_{x\in U} m_x \ d\lambda^N\ =
\ \prod_{x\in U} \nu(x).
\ee
We omit the obvious inductive proof.
\end{remark}

The paper is organized as follows.  In Section \ref{graphical.construcion}, we
construct the process \emph{\`a la Harris} following \cite{fm}.  We use this
construction to estimate the correlations and prove
Proposition~\ref{chaos-pablo} in Section \ref{sec.correlations}.  Finally, in
Section \ref{sec.conv.means}, we prove Theorem~\ref{main.theorem}.

\section{Graphical construction}
\label{graphical.construcion}
The graphical construction is used to prove the asymptotic 
independence property. A realization of the process $(\xi_t,\,t\ge
0)$ is a deterministic function of a realization of a marked point process. All
initial conditions $\xi$ use the same realization of the marked point process.

Let $C:=\max_{x\in \Lambda}q(x,0)$ be the (maximum) 
absorption rate, and 
\[
\bar q := \sup_{x\in\Lambda} \sum_{y\in\Lambda\setminus \{x\}} q(x,y);
\quad p(x,y):=\frac{q(x,y)}{\bar q},\; y\neq x;
\quad p(x,x):=1-\sum_{y\in\Lambda\setminus \{x\}} p(x,y).
\]
To each particle $i$, we associate two independent Poisson marked processes
$(\o_i^I,\o^V_i)$, which we call respectively the {\it internal} and {\it voter}
point processes, described as follows.
\begin{itemize}
\item The internal process is defined on 
$\R\times \Lambda^\Lambda$ with intensity measure $\bar q dt\ d\gamma(F)$ with
\[
d\gamma(F)=\prod_{x\in \Lambda} p(x,F(x)),\quad\forall F
\in \Lambda^\Lambda.
\]
that is, $\gamma$ is the joint distribution of independent random variables with
marginal distributions 
$\{p(x,\cdot),\ x\in\Lambda\}$, so that to each state $x$ the
random mark $F$ assigns a state $y=F(x)$ with probability $p(x,y)$. An internal
marked-time is $(t,F)$, which means that if at at time $t-$, particle $i$ is
at site $x$, then at time $t$, particle $i$ jumps to site $F(x)$. This gives the
correct rate $q(x,y)=\bar q\, p(x,y)$ for jumps from $x$ to $y$.

\item The voter point process is defined on $\R\times(\{1,\dots,N\}\setminus\{i\})\times
  \{0,1\}^\Lambda$ with measure intensity $C\, dt\, d\beta_i(j)\,d\gamma'(\zeta)$,
  where $\beta_i$ is the uniform probability on $\{1,\dots,N\}\setminus \{i\}$,
  and
  $\gamma'$ is the joint distribution of independent Bernoulli random variables
  with parameters $\frac{q(x,0)}{C}$, $x\in\Lambda$.  A voter marked-time is
  $(t,j,\zeta)$, where $j$ corresponds to a reincarnation label, and $\zeta$
  takes into account the position-dependent rate: if the $i$-th particle is at
  position $x$ at time $t-$, it jumps to the position of particle $j$ at time
  $t$ only if $\zeta(x)=1$, yielding the correct rate $q(x,0)/(N-1)$.
\end{itemize}
We call $\o=((\o_i^I,\o_i^V),i\in \{1,\dots,N\})$ an i.i.d.\,sequence
of stationary marked point processes associated with labeled particles.
Finally, for any subset of labels $a\subset \{1,\dots,N\}$, we denote
by $\o_a$ the processes associated with labels $a$. For any
$s<t$ reals, we denote by $\o_a[s,t[$ and $\o_a[s,t]$ the projections of the marked-times
in the time period $[s,t[$ and $[s,t]$, respectively. 

We construct $\{\xi_t,t\ge 0\}$ in such a way that $\xi_t$ is a function of the
initial configuration $\xi_0$ and the time-marks $\o[0,t]$, $t\ge 0$.  Fix an
initial configuration $\xi_0\in \Lambda^N$, and $t>0$.  There is, almost surely,
a finite number of time-marks within $[0,t]$, say $K$, and let $\{b_k,k\le K\}$
be the ordered time-realizations. We build $\xi_t$ inductively as follows.
\begin{itemize}
\item At time $0$, the configuration is $\xi_0$.
\item Assume $\xi_{b_k}$ is known. For $t\in [b_k,b_{k+1}[$, set $\xi_t=
  \xi_{b_k}$. We describe now $\xi_{b_{k+1}}$.
\begin{itemize}
\item If $b_{k+1}$ corresponds to an internal time of particle $i$ and mark $F$,
  we move particle $i$ to $F(x)$ where $x=\xi_{b_k}(i)$.  This 
move occurs with rate $\bar q\times \frac{q(x,F(x))}{\bar q}
= q(x,F(x))$.
\item If $b_{k+1}$ corresponds to a voter time of particle $i$ and mark
  $(j,\zeta)$, we move particle $i$ to the position of particle $j$ if
  $\zeta(x)=1$ where $x=\xi_{b_k}(i)$.  This move occurs with rate $C\times
  \frac{q(x,0)}{C}\times\frac{1}{N-1}= q(x,0)/(N-1)$.
\end{itemize}
\end{itemize}
It is easy to check that $\{\xi_t,t\ge 0\}$, as constructed above, has generator
given by \eqref{generator}.

By translation invariance of the law of $\omega$, if we use the marks
$\omega[-t,0]$ instead of the marks $\omega[0,t]$, the configuration so obtained
has the same law of $\xi_t$ as constructed above. We abuse notation and call
$\xi_t$ the configuration constructed with the marks $\omega[-t,0]$.
For each particle label $i$, we build simultaneously a set of labels $\psi^i(t)$
of particles which could potentially influence $\xi_t(i)$; this set is also a
function of $\o[-t,0]$.  First, the process $t\mapsto\psi^i(t)$ may only change
at the time-realizations of the voter process $\o^V$, and it changes as
follows. Let $-v$ be the largest time realization of $\o_i^V[-t,0[$, and let
$(j,\zeta)$ its associated mark. Then, for $0\le s<v$, we set $\psi^i(s)=\{i\}$,
and $\psi^i(v)=\{i,j\}$ (regardless the values of $\zeta$).  For $s<t$, assume
that $\psi^i(s)$ is built, and let $-v$ be the largest time realization of
$\o^V_{\psi^i(s)}[-t,-s[$, and let $(j,\zeta)$ its associated mark.  Then,
\[
\forall u\in [s,v[\quad \psi^i(u)=\psi^i(s),\quad\text{and}\quad
\psi^i(v)=\psi^i(s)\cup\{j\}.
\]
Note that for any $t>0$, $\psi^i(t)$ is $\sigma(\o^V[-t,0[)$-measurable,
and that for any subset of labels $a\subset\{1,\dots,N\}$ containing
$i$, we have
\begin{equation}\label{step-1}
\acc{\psi^i(t)=a}\in \sigma(\o^V_a[-t,0[),\quad\text{and}\quad
\acc{\psi^i(t)=a,\ \xi_t(i)=x}\in \sigma(\o_a[-t,0[).
\end{equation}
The next lemma says that the sets of labels associated to two different
particles intersect with probability of order
$1/N$. 
\begin{lema}\label{lem-rate}
For $i,j$ distinct labels, and $t>0$
\begin{equation}\label{step-key}
\P\pare{\psi^i(t)\cap \psi^j(t)\not= \emptyset}\le
\frac{1}{N-1}\pare{e^{2{C}t}-1}.
\end{equation}

\end{lema}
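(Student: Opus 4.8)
The plan is to bound the growth of the "influence set" $\psi^i(t)$ by a branching/coupling argument and then control the probability that two such sets meet. First I would set up the dynamics: each process $t\mapsto\psi^i(t)$ starts as the singleton $\{i\}$ and grows only at voter marks of labels already in the set, each label in the set acquiring a new label at rate $C$ (the voter marks have intensity $C\,dt$ per particle). Thus $|\psi^i(t)|$ is dominated by — in fact equal in law to, ignoring the possibility of picking an already-present label — a Yule (pure-birth) process started from $1$ with birth rate $C$, so $\E[|\psi^i(t)|]\le e^{Ct}$. The same bound holds for $|\psi^j(t)|$, and the two processes are driven by disjoint parts of $\o^V$ as long as they remain disjoint.

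The key step is to estimate the rate at which $\psi^i$ and $\psi^j$ first intersect. Condition on the filtration generated by $\o^V$ up to time $s$ on the event that $\psi^i(s)$ and $\psi^j(s)$ are still disjoint with sizes $a=|\psi^i(s)|$ and $b=|\psi^j(s)|$. The sets can merge in two ways at an infinitesimally later time: a voter mark attached to one of the $a$ labels of $\psi^i$ selects (via its uniform reincarnation label $j'\in\{1,\dots,N\}\setminus\{\cdot\}$) one of the $b$ labels in $\psi^j$, or symmetrically. Each of the $a$ labels fires voter marks at rate $C$ and then hits $\psi^j$ with probability at most $b/(N-1)$; summing, the instantaneous merging rate is at most $2\,C\,a\,b/(N-1)\le \tfrac{2C}{N-1}\,|\psi^i(s)|\,|\psi^j(s)|$. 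Hence, writing $\tau$ for the first intersection time,
\[
\P(\psi^i(t)\cap\psi^j(t)\neq\emptyset)=\P(\tau\le t)\le
\frac{2C}{N-1}\int_0^t \E\big[|\psi^i(s)|\,|\psi^j(s)|\,\ind_{\{s<\tau\}}\big]\,ds.
\]
On $\{s<\tau\}$ the two sets are disjoint and evolve independently, so the expectation factorizes and is bounded by $e^{Cs}\cdot e^{Cs}=e^{2Cs}$; actually one must be slightly careful, since conditioning on $\{s<\tau\}$ biases the sizes, but bounding each factor by the unconditioned Yule expectation $e^{Cs}$ is legitimate because removing the indicator only enlarges the expectation of the (nonnegative) integrand. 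Integrating $\tfrac{2C}{N-1}e^{2Cs}$ from $0$ to $t$ gives exactly $\tfrac{1}{N-1}(e^{2Ct}-1)$, which is the claimed bound.

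The main obstacle is making the "factorization on $\{s<\tau\}$" rigorous: one wants $\E[|\psi^i(s)|\,|\psi^j(s)|\,\ind_{\{s<\tau\}}]\le \E[|\psi^i(s)|]\,\E[|\psi^j(s)|]$, which is not automatic because the indicator and the two sizes are correlated. The clean way is to run a dominating coupled system of \emph{two independent} Yule-type processes $\tilde\psi^i,\tilde\psi^j$ (each growing by independent fresh labels, never interacting) together with an extra exponential "alarm clock" whose rate at time $s$ is $\tfrac{2C}{N-1}|\tilde\psi^i(s)|\,|\tilde\psi^j(s)|$; couple so that $\tau$ is at least the alarm time. Then $\P(\tau\le t)$ is bounded by the probability the alarm rings before $t$, which by the standard formula for a time-inhomogeneous exponential is $\E[1-\exp(-\tfrac{2C}{N-1}\int_0^t|\tilde\psi^i(s)|\,|\tilde\psi^j(s)|\,ds)]\le \tfrac{2C}{N-1}\E\int_0^t|\tilde\psi^i(s)|\,|\tilde\psi^j(s)|\,ds$, and now the two factors are genuinely independent with expectation $e^{Cs}$ each, yielding the bound. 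I would present the argument in this coupled form to avoid any delicate conditioning.
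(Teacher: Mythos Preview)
Your argument is correct and follows the same overall strategy as the paper: bound $\E[|\psi^i(t)|]\le e^{Ct}$, compute the instantaneous merging rate as at most $\tfrac{2C}{N-1}|\psi^i(s)|\,|\psi^j(s)|$, and integrate.

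The one place you diverge is the factorization step, which you flag as the ``main obstacle'' and resolve by an auxiliary coupling with two independent Yule processes and an alarm clock. The paper avoids this detour entirely: it uses the measurability observation that $\{\psi^i(s)=a\}\in\sigma(\o^V_a[-s,0[)$, so that for \emph{disjoint} $a,b$ the events $\{\psi^i(s)=a\}$ and $\{\psi^j(s)=b\}$ live in independent $\sigma$-algebras and factor exactly:
\[
\E\big[\ind_{\{\psi^i(s)\cap\psi^j(s)=\emptyset\}}|\psi^i(s)|\,|\psi^j(s)|\big]
=\sum_{a\cap b=\emptyset}|a|\,|b|\,\P(\psi^i(s)=a)\P(\psi^j(s)=b)
\le \E[|\psi^i(s)|]\,\E[|\psi^j(s)|].
\]
This is cleaner than building a dominating coupled system, and it is precisely the content of the paper's remark \reff{step-1}. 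Your coupling works, but you may prefer to replace it by this one-line measurability argument.
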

\begin{proof}
First, we show that the rate of growth of $\psi^i(t)$ is at 
most exponential. 
It is clear from the construction of $\psi^i$, that its rate
of growth at time $t$ is at most $C \psi^i(t)$, and that it grows
by adding one label (from $\{1,\dots,N\}\setminus \psi^i(t)$)
uniformly at random. Thus,
\begin{equation}\label{step-3}
\frac{d \E\cro{|\psi^i(t)|\big|\sigma(\o[-t,0[)}}{dt}\le
C|\psi^i(t)|\Longrightarrow
\E[|\psi^i(t)|]\le \exp\pare{{C}t}.
\end{equation}
Second, we show that for $i,j$ two distinct labels, 
\begin{equation}\label{step-2}
\P\pare{\psi^i(t)\cap \psi^j(t)\not= \emptyset}\le
\frac{2C}{N-1}\int_0^t \E[\psi^i(s)]\E[\psi^j(s)]ds.
\end{equation}
Note that
\begin{equation}\label{step-4}
\P\pare{\psi^i(t)\cap \psi^j(t)\not=\emptyset}=
\int_0^t\E\cro{\frac{d\P\pare{\psi^i(s)\cap \psi^j(s)\not=\emptyset
\big|\sigma(\o[-s,0[)}}{ds}}ds
\end{equation}
and
\begin{equation}\label{step-5}
\begin{split}
\E\cro{\frac{d\P\pare{\psi^i(s)\cap \psi^j(s)\not=\emptyset
\big|\sigma(\o[-s,0[)}}{ds}}=& \frac{2C }{N-1} 
\E\cro{\ind_{\psi^i(s)\cap \psi^j(s)= \emptyset} |\psi^i(s)\|\psi^j(s)|}\\
=&\frac{2C }{N-1}\sum_{a\cap b=\emptyset} |a\|b|\
\P\pare{ \psi^i(s)=a,\psi^j(s)=b}\\
=&\frac{2C }{N-1}\sum_{a\cap b=\emptyset} |a\|b|\
\P\pare{ \psi^i(s)=a}\P\pare{\psi^j(s)=b}\\
\le &\frac{2C }{N-1}\E\cro{ \psi^i(s)}\E\cro{\psi^j(s)}.
\end{split}
\end{equation}
We used in \reff{step-5} that
for two non-overlapping subsets of labels $a$ and $b$,
$\{\psi^i(t)=a\}$ and $\{\psi^j(t)=b\}$ are independent by \reff{step-1}.

This concludes the proof of \reff{step-2}. Now, \reff{step-key} follows
from \reff{step-2} and \reff{step-3}.
\end{proof}

\section{Proof of Proposition~\ref{chaos-pablo}}
\label{sec.correlations}
We need to show that for any $x,y\in \Lambda$, any time $t\ge 0$,
and initial configuration $\xi$,
\be{step-11}
\big|\E[\eta(\xi_t)(x)\,\eta(\xi_t)(y)]-
\E[\eta(\xi_t)(x)]\E[\eta(\xi_t)(y)]\big|\;\le\; 2Ne^{2{C}t}.
\ee
Here and throughout this section we use $\E$ and $\P$ to denote $\E_\xi^N$ and $\P_\xi^N$ respectively. Using \reff{a31}, the difference of expectations in the left hand side of
\eqref{step-11} is
\be{step-13}
\sum_{i\le N}\sum_{j\le N}
\big[\P(\xi_t(i)=x,\xi_t(j)=y)-\P(\xi_t(i)=x)\,\P(\xi_t(j)=y)\big].
\ee
For a subset $a$, $\{\xi_t(i)=x, \psi^i(t)=a\}$ 
is $\sigma(\o_a[-t,0[)$-measurable, by remark \reff{step-1}.
Thus, for two non-overlapping subsets of labels $a$ and $b$
\begin{align*}
\P\big(\psi^i(t)=a,\,&\psi^j(t)=b,\, \xi_t(i)=x,\xi_t(j)=y\big)\\
&=\;\P\pare{\psi^i(t)=a, \xi_t(i)=x}\P\pare{\psi^j(t)=b, \xi_t(j)=y}.
\end{align*}
Compute a generic term on the right hand side of \reff{step-13} with $i\not= j$:
\be{step-14}
\begin{split}
\P(\{\xi_t(i)=x,\xi_t(j)=y\})&=\;\P(\psi^i(t)\cap \psi^j(t)\not=
\emptyset, \xi_t(i)=x,\xi_t(j)=y)\\
&\qquad+\sum_{a\cap b=\emptyset}
\P\pare{\psi^i(t)=a,\psi^j(t)=b, \xi_t(i)=x,\xi_t(j)=y}\\
&=\;\P(\psi^i(t)\cap \psi^j(t)\not=\emptyset, \xi_t(i)=x,\xi_t(j)=y)\\
&\qquad+\sum_{a\cap b=\emptyset}
\P\pare{\psi^i(t)=a,\xi_t(i)=x}
\P\pare{\psi^j(t)=b, \xi_t(j)=y}.
\end{split}
\ee 
To compute $\P(\xi_t(i)=x)\P(\xi_t(j)=y)$, we can think of two
independent marked-point processes driving the evolution (put a tilda for the
independent copy), and we have a decomposition similar to \reff{step-14} for
$i\neq j$:
\be{step-15}
\begin{split}
\P(\xi_t(i)=x)\,\P(\xi_t(j)=y)&=\;\P(\psi^i(t)\cap \tilde
\psi^j(t)\not=\emptyset, \xi_t(i)=x,\tilde \xi_t(j)=y)\\
&\qquad+\sum_{a\cap b=\emptyset}
\P\pare{\psi^i(t)=a,\tilde \psi^j(t)=b, \xi_t(i)=x,\tilde \xi_t(j)=y}\\
&=\;\P(\psi^i(t)\cap\tilde \psi^j(t)\not=\emptyset, \xi_t(i)=x,
\tilde\xi_t(j)=y)\\
&\qquad+\sum_{a\cap b=\emptyset}
\P\pare{\psi^i(t)=a,\xi_t(i)=x}\P\pare{\psi^j(t)=b, \xi_t(j)=y}.
\end{split}
\ee
Subtracting \eqref{step-14} and \eqref{step-15} we get that for $i\neq j$,
\begin{align*}
|\P(\{\xi_t(i)=&x,\xi_t(j)=y\})-
\P(\{\xi_t(i)=x)\P(\xi_t(j)=y\})| \\
&\le \;\P(\psi^i(t)\cap \psi^j(t)\not=
\emptyset)+\P(\psi^i(t)\cap \tilde \psi^j(t)\not=\emptyset)
\;\le\; \frac{2}{N-1}(e^{2{C}t}-1),
\end{align*}
by Lemma~\ref{lem-rate} (we have used that the Lemma holds also for $\psi^i(t)\cap\tilde \psi^j(t)$).  Thus, by summing over $i$ and $j\in \{1,\dots,N\}$,
and noting that there are $N$ diagonal terms which bring a factor $N$ when
$i=j$, we get the desired bound.
\section{Proof of Theorem~\ref{main.theorem}.}
\label{sec.conv.means}
In view of \reff{triangle0}, we first estimate 
$\Exi\cro{\|m(\xi_t)-T_tm(\xi))\|}$. It is more convenient
to work with $l^2$-norm, rather than total variation norm.
For a function $\v:\Lambda\to\R$, we denote its $l^2$-norm
as
\[
\|\v\|_2=\Bigl(\sum_{x\in \Lambda} \v^2(x)\Bigr)^{\frac{1}{2}}.
\]
By Cauchy-Schwarz, note that if $\mu,\nu$ are probabilities on $\Lambda$,
\[
\|\mu-\nu\|_2\le \|\mu-\nu\|\le \sqrt{|\Lambda}|\ \|\mu-\nu\|_2.
\]
To estimate $\Exi\cro{\|m(\xi_t)-T_tm(\xi)\|_2}$  note that
\be{step-16}
\Exi\cro{\|m(\xi_t)-T_tm(\xi)\|_2}\;\le\;
\Exi\cro{\|m(\xi_t)-\Exi\cro{m(\xi_t)}\|_2}\,+\,
\|\Exi\cro{m(\xi_t)}-T_tm(\xi)\|_2.
\ee
Taking $y=x$ in \eqref{correlations} we obtain
\be{var-N}
\E\cro{\pare{m_x(\xi_t)-\E[m_x(\xi_t)]}^2}\,\le\, \frac{2e^{2{C}t}}{N}.
\ee
By \reff{var-N} and Jensen's inequality, we have
\be{step-17}
\cro{\Exi\|m(\xi_t)-\Exi\cro{m(\xi_t)}\|_2}^2\le
\Exi\cro{\|m(\xi_t)-\Exi\cro{m(\xi_t)}\|_2^2}\le \frac{2|\Lambda|
e^{2C t}}{N}.
\ee
The second term in \reff{step-16} is dealt with in the following lemma.
\begin{lema}\label{teo.conv.means}
For any $T>0$,
\begin{eqnarray}
  \lim_{N\to\infty}\max_{0\le t \le T}\max_{\xi \in \Lambda^N} 
\|\Exi\cro{m(\xi_t)}-T_tm(\xi)\|_2 = 0\,. \label{lgn2}
\end{eqnarray}
\end{lema}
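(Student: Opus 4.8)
The plan is to compare the two evolutions driving $\Exi[m(\xi_t)]$ and $T_tm(\xi)$ directly through their differential equations and close a Gr\"onwall-type estimate, using the correlation bound of Proposition~\ref{chaos-pablo} as the sole source of error. Write $u^N_x(t):=\Exi[m_x(\xi_t)]$ and $v_x(t):=T_tm(\xi)(x)$. From \eqref{generator.eta} and \eqref{kolmogorov.cond.evolution} one has, for $w_x(t):=u^N_x(t)-v_x(t)$,
\[
\frac{d}{dt}w_x(t)=\sum_{y\in\Lambda}q(y,x)\,w_y(t)
+\sum_{y\in\Lambda}q(y,0)\Bigl(\Exi[m_y(\xi_t)m_x(\xi_t)]-v_y(t)v_x(t)\Bigr)
+\frac1{N-1}\sum_{y\in\Lambda}q(y,0)\Exi[m_y(\xi_t)m_x(\xi_t)],
\]
with $w_x(0)=0$. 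The last term is bounded by $C/(N-1)$ in absolute value. For the middle term, add and subtract $u^N_y(t)u^N_x(t)$: the part $\Exi[m_ym_x]-u^N_yu^N_x$ is controlled by \eqref{correlations}, namely by $\tfrac{1}{N}+\tfrac{2}{N}(e^{2Ct}-1)$ uniformly in $\xi$, while $u^N_yu^N_x-v_yv_x=w_y u^N_x+v_y w_x$ is linear in $w$. Hence
\[
\Bigl|\frac{d}{dt}w_x(t)\Bigr|\le \sum_{y}\bigl(|q(y,x)|+q(y,0)(u^N_x(t)+v_y(t))\bigr)|w_y(t)|\;+\;|\Lambda|\,C\,\varepsilon_N(t),
\]
where $\varepsilon_N(t)=O(e^{2Ct}/N)$ collects the correlation error and the $1/(N-1)$ term. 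Since $u^N_x,v_y\le 1$ and the coefficients $|q(y,x)|$, $q(y,0)$ are bounded uniformly in $x,y$ (here $\Lambda$ is finite), summing over $x$ gives $\frac{d}{dt}\sum_x|w_x(t)|\le K\sum_x|w_x(t)|+ |\Lambda|^2 C\,\varepsilon_N(t)$ for a constant $K=K(\Lambda,Q)$, and Gr\"onwall yields $\|w(t)\|_1\le \frac{|\Lambda|^2 C}{K}(e^{Kt}-1)\sup_{s\le t}\varepsilon_N(s)\to 0$ uniformly in $\xi$ and uniformly for $t\in[0,T]$; passing from $\ell^1$ to $\ell^2$ is immediate. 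To run Gr\"onwall cleanly one should take absolute values carefully — e.g. differentiate $\sum_x w_x^2$ instead, or note $\frac{d}{dt}|w_x|\le |\frac{d}{dt}w_x|$ holds in the weak sense — but this is routine.

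A subtlety to address is that \eqref{generator.eta} is written for $\Exi[m_x(\xi_t)]$ with the weight $\frac{N}{N-1}$; one should either keep that factor (it produces exactly the $\frac1{N-1}$ remainder isolated above) or absorb it, and in either case the extra term is $O(1/N)$ and harmless. One also needs that $t\mapsto u^N_x(t),v_x(t)$ are genuinely differentiable solutions of these ODEs on $[0,T]$ — this is standard since $\Lambda$ is finite (the semigroups $\exp(t\cL^N)$ and $T_t$ are smooth in $t$), and it is already asserted in the text around \eqref{kolmogorov.cond.evolution}.

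The main obstacle is not conceptual but bookkeeping: making sure the Gr\"onwall constant $K$ and the error $\varepsilon_N$ are genuinely uniform in the initial configuration $\xi$ and in $t\in[0,T]$. Uniformity in $\xi$ is guaranteed because Proposition~\ref{chaos-pablo} is stated as a supremum over $\xi\in\Lambda^N$ and the bounds $0\le m_x,v_x\le 1$ hold for every $\xi$; uniformity in $t\le T$ is guaranteed because $\varepsilon_N(t)\le \varepsilon_N(T)=O(e^{2CT}/N)$ is increasing. An alternative, perhaps cleaner route that avoids differentiating a sum of absolute values: use the variation-of-constants (Duhamel) representation against the conditioned semigroup $T_t$ and estimate the resulting time integral of the correlation error directly; this is essentially the same computation organized slightly differently. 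Either way, the conclusion \eqref{lgn2} follows once the Gr\"onwall estimate is in place.
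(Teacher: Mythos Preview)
Your proposal is correct and follows essentially the same route as the paper: subtract the ODEs \eqref{generator.eta} and \eqref{kolmogorov.cond.evolution}, control the quadratic discrepancy by Proposition~\ref{chaos-pablo}, linearize $u_yu_x-v_yv_x$ exactly as you do, and close with Gr\"onwall. The only cosmetic difference is that the paper works directly with $\tfrac{d}{dt}\|u-v\|_2^2$ (your suggested cleaner alternative) rather than with $\sum_x|w_x|$, which sidesteps the absolute-value differentiation issue you flagged.
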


\begin{proof}
We introduce some simplifying notations.
\be{symbol-1}
u_x(t)=\Exi\cro{m_x(\xi_t)},\quad
\text{and}\quad v_x(t)=T_tm(\xi)(x).
\ee
We  show that there is a constant $B$ such that for any $t>0$
\be{step-30}
\frac{d}{dt}\|u(t)-v(t)\|_2^2\le B \|u(t)-v(t)\|_2^2+\frac{4
e^{2{C}t}\sum_y q(y,0)}{N}.
\ee
Since $\|u(0)-v(0)\|_2=0$, the result follows at once by means of Gronwall's inequality.

We fix $t>0$, and we often omit to display the time-dependence.
From \reff{generator.eta} 
\be{a82} 
\frac{d u_x}{dt} =\sum_{y \in \Lambda} q(y,x)u_y + \sum_{y\in \Lambda} q(y,0) 
u_xu_y+R_x(\xi,t)
\ee
$$
\text{where}\quad R_x(\xi,t)=\sum_{y \in\Lambda}q(y,0)
\cro{\frac{N}{N-1}\Exi\cro{m_y(\xi_t)m_x(\xi_t)}
-\Exi\cro{m_y(\xi_t)}\Exi\cro{m_x(\xi_t)}}.
$$
Proposition~\ref{chaos-pablo}, implies that
\be{step-31}
\sup_{x\in \Lambda}\sup_{\xi} |R_x(\xi,t)|\le \frac{ 2e^{2Ct}\sum_y q(y,0)}
{N}.
\ee
On the other hand, from \eqref{kolmogorov.cond.evolution} 
\be{step-32}
\frac{d v_x}{dt}
=\sum_{y \in \Lambda} q(y,x)v_y + \sum_{y\in \Lambda} q(y,0) v_xv_y.
\ee
Subtracting \reff{step-32} from \reff{a82}, we obtain
\[
\frac{d (u_x-v_x)}{dt}=\sum_{y \in \Lambda} q(y,x)(u_y-v_y)+
\sum_{y\in \Lambda} q(y,0) (u_xu_y-v_xv_y)+\ R_x(\xi,t).
\]
Now, 
\be{step-33}
\begin{split}
\frac{d}{dt}\frac{1}{2}\|u(t)-&v(t)\|_2^2=\sum_x
\frac{d (u_x-v_x)}{dt}(u_x-v_x)\\
=&\sum_{x\in\Lambda}\sum_{y\in \Lambda} q(x,y)(u_y-v_y)(u_x-v_x) +\sum_{x\in \Lambda}\sum_{y\in\Lambda} q(y,0) (u_xu_y-v_xv_y)(u_x-v_x)\\
&\ +\sum_x R_x (u_x-v_x).
\end{split}
\ee

We deal with each term on the right hand side of \reff{step-33}
in turns. First,
\be{first_term}
|\sum_{x\in\Lambda}\sum_{y\in \Lambda} q(x,y)(u_y-v_y)(u_x-v_x)|
\le \pare{\sum_{x,y\in \Lambda} q^2(x,y)}^{1/2} \|u-v\|_2^2.
\ee
To deal with the second term observe that $u_xu_y-v_xv_y = v_x(u_y-v_y) + u_y(u_x-v_x)$ and hence the second term in \eqref{step-33} term equals
\be{second_term}
\sum_{x\in \Lambda}\sum_{y\in\Lambda} q(y,0)v_x (u_y-v_y)(u_x-v_x) + \sum_{x\in \Lambda}\sum_{y\in\Lambda} q(y,0)u_y (u_x-v_x)(u_x-v_x).
\ee
Since $v$ is bounded, 
the first term in \eqref{second_term} can be treated as \eqref{first_term} with $q(x,y)$ replaced by $\tilde q(x,y) := q(y,0)v_x$. 
The second term in \eqref{second_term} equals
$$
\|u -v\|_2^2\sum_y q(y,0) u_y \le  \|u -v\|_2^2\sup_y q(y,0). 
$$
Finally, taking into account that $\sum_x v_x=\sum_x u_x=1$, the last term of \eqref{step-33} can be bounded by
$$
2\sup_x|R_x| \le \frac{4e^{2Ct}\sum_y q(y,0)}{N}
$$

Collecting all these computations, we 
obtain \reff{step-30}.
\end{proof}

%
%
%
%

\noindent{\bf Acknowledgements}. This work was initiated during
the semester {\it Interacting Particle systems} at IHP, Paris.
We would like to thank the staff and the organizers for a wonderful
work atmosphere.
A.A. acknowledges the support of the French Ministry of Education 
through the ANR BLAN07-2184264 grant. P.A.F was partially supported by FAPESP. P.G. is partially supported by Universidad de Buenos Aires under grang X447, by ANPCYT PICT 2006-1309 and CONICET PIP 5478/1438. 

\addcontentsline{toc}{section}{Bibliografia}


\begin{thebibliography}{ba}


\bibitem{burdzy} Burdzy, K., Holyst, R., March, P. (2000) A Fleming-Viot
  particle representation of the Dirichlet Laplacian, \emph{Comm. Math. Phys.}
  {\bf 214}, 679-703.

\bibitem{darroch} Darroch, J.N., Seneta, E. (1967) On quasi-stationary
  distributions in absorbing continuous-time finite Markov chains, \emph{ J.
    Appl. Prob.} {\bf 4}, 192-196.

\bibitem{fm} Ferrari, P.A., Maric, N. (2007) Quasi-stationary distributions and
  Fleming-Viot processes in countable spaces \emph{Electronic Journal of
    Probability} {\bf 12} Paper 24, math.PR/0605665.
 
\bibitem{fv} Fleming,.W.H., Viot, M. (1979) Some measure-valued Markov processes
  in population genetics theory, \emph{Indiana Univ. Math. J.} {\bf 28},
  817-843.
 
\bibitem{gk1}Grigorescu, Ilie; Kang, Min (2004)
Hydrodynamic limit for a Fleming-Viot type system.  
\emph{ Stochastic Process. Appl. } 110,  no. 1, 111--143.

\bibitem{gk2}
Grigorescu, Ilie; Kang, Min (2006)
Tagged particle limit for a Fleming-Viot type system.  
\emph{ Electron. J. Probab.} 11, no. 12, 311--331 

\end{thebibliography}
\end{document}